\newcommand{\sour}{\mathop{\boldsymbol s}}
\newcommand{\dom}{\mathop{\boldsymbol d}}
\newcommand{\ran}{\mathop{\boldsymbol r}}
\newcommand{\inv}{^{-1}}
\newcommand{\skel}[1]{^{(#1)}}
\newtheorem{Thm}{Theorem}
\newtheorem{Prop}[Thm]{Proposition}
\newtheorem{Lemma}[Thm]{Lemma}
{\theoremstyle{definition}
}
{\theoremstyle{remark}
}
\theoremstyle{remark}
\theoremstyle{remark}
\theoremstyle{remark}
\theoremstyle{remark}
\theoremstyle{remark}
\numberwithin{equation}{section}
\title[Chain conditions on \'etale groupoid algebras]{Chain conditions on \'etale groupoid algebras with applications to Leavitt path algebras and inverse semigroup algebras}
\author{Benjamin Steinberg}
\address{%
    Department of Mathematics\\
    City College of New York\\
    Convent Avenue at 138th Street\\
    New York, New York 10031\\
    USA}
\email{bsteinberg@ccny.cuny.edu}
\thanks{The author was supported by  NSA MSP \#H98230-16-1-0047.}
\date{September 8, 2017}
\keywords{\'etale groupoids, groupoid algebras, chain conditions, Leavitt path algebras, inverse semigroup algebras}
\subjclass[2010]{16P20,16P40,22A22,20M25}
\begin{document}

\begin{abstract}
The author has previously associated to each commutative ring with unit $R$ and \'etale groupoid $\mathscr G$ with locally compact, Hausdorff and totally disconnected unit space an $R$-algebra $R\mathscr G$.  In this paper we characterize when $R\mathscr G$ is Noetherian and when it is Artinian.  As corollaries, we extend the characterization of Abrams, Aranda~Pino and Siles~Molina of finite dimensional and of Noetherian Leavitt path algebras over a field to arbitrary commutative coefficient rings and we recover the characterization of Okni\'nski of Noetherian  inverse semigroup algebras and of Zelmanov of Artinian inverse semigroup algebras.
\end{abstract}

\maketitle

\section{Introduction}
Groupoid $C^*$-algebras have played a crucial role in operator algebra theory since the seminal work of Renault~\cite{Renault}; see also~\cite{Paterson,Exel}.  Recently, the author introduced~\cite{mygroupoidalgebra,mygroupoidarxiv} a ring theoretic analogue for the class of ample groupoids~\cite{Renault,Paterson} over any base commutative ring with unit.  Ample groupoids are \'etale groupoids with a locally compact, Hausdorff and totally disconnected unit space where we recall that a topological groupoid is \'etale if its structure maps are local homeomorphisms.  Note that these algebras were independently introduced over the field of complex numbers slightly later in~\cite{operatorguys1}.

Algebras of ample groupoids, dubbed Steinberg algebras by Clark and Sims~\cite{GroupoidMorita}, include many important classes of rings including group algebras, commutative algebras over a field generated by idempotents, crossed products of the previous two sorts of rings, inverse semigroup algebras and Leavitt path algebras~\cite{Leavittbook}.  Over the past few years, there has been a plethora of papers on these algebras,  particularly in connection with Leavitt path algebras, cf.~\cite{operatorguys1,operatorsimple1,operatorguys2,reconstruct,GroupoidMorita,CarlsenSteinberg,Strongeffective,ClarkPardoSteinberg,CenterLeavittGroupoid,groupoidbundles,groupoidprimitive,Clarkdecomp,GonRoy2017a,GonRoy2017,Hazrat2017,AraSims2017,Purelysimple}.

In this paper we investigate the ascending and descending chain conditions on ample groupoid algebras.  A classical result of Connell~\cite{connell} says that a group ring $RG$ is Artinian if and only if $G$ is finite and $R$ is Artinian.  This was (essentially) extended to semigroups by Zelmanov~\cite{Zelmanovsgp}.  The finite dimensional Leavitt path algebras over a field were characterized by Abrams, Aranda~Pino and Siles~Molina~\cite{LeavittArtinian}.

The problem of characterizing Noetherian group rings is more complicated.  The largest class of groups known to have Noetherian group algebras over a Noetherian coefficient ring is the class of polycyclic-by-finite groups.  Thus the best we can hope to achieve for groupoid algebras is to classify the Noetherian property up to the case of group rings, which we achieve.  Special cases of our results include the characterization of Noetherian Leavitt path algebras over a field~\cite{LeavittNoeth} (which we now extend to any base ring) and Okni\'nski's characterization of Noetherian inverse semigroup algebras~\cite{Noetherianokninskiinverse}.

Groupoid algebras admit an involution and hence are isomorphic to their opposite algebras.  Thus there is no difference for them between the left and right Noetherian or Artinian properties and so we omit the adjectives ``left" and ``right" in what follows.
Our main result is the following theorem (see below for undefined terminology).

\begin{Thm}\label{t:main}
Let $\mathscr G$ be an ample groupoid and $R$ a commutative ring with unit.
\begin{enumerate}
\item   The groupoid algebra $R\mathscr G$ is Noetherian if and only if $\mathscr G$ has finitely many objects and $RG$ is Noetherian for each isotropy group $G$ of $\mathscr G$.
\item  The groupoid algebra $R\mathscr G$ is Artinian if and only if $\mathscr G$ is finite and $R$ is Artinian.
\item The groupoid algebra $R\mathscr G$ is semisimple if and only if $\mathscr G$ is finite and $R$ is a finite direct product of fields whose characteristics do not divide the order of any isotropy subgroup of $\mathscr G$.
\end{enumerate}
Moreover, in any of these cases $R\mathscr G$ is a finite direct product of matrix algebras over group algebras $RG$ of isotropy groups $G$ of $\mathscr G$.
\end{Thm}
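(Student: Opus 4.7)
My strategy is to prove a strong form of the ``moreover'' clause first---namely that whenever $\mathscr G^{(0)}$ is finite, $R\mathscr G$ is isomorphic to $\prod_{[u]} M_{|[u]|}(RG_u)$ with $[u]$ ranging over orbits---and to deduce the three chain conditions by translating them through this block decomposition. The necessary directions then split into (a) showing that infiniteness of $\mathscr G^{(0)}$ already precludes both chain conditions, and (b) applying Morita invariance and standard facts about group algebras inside each block.

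\emph{Structure in the finite-unit case.} A finite locally compact Hausdorff totally disconnected space is discrete, so $\mathscr G^{(0)}$ partitions into finitely many finite clopen orbits $[u]$, and for each orbit $\sour\inv([u]) = \ran\inv([u])$ is clopen in $\mathscr G$. Hence $R\mathscr G$ splits as a finite ring direct product $\prod_{[u]} R(\mathscr G|_{[u]})$. Choosing a base point $u_0 \in [u]$ and transversal arrows $\gamma_v \colon u_0 \to v$ for each $v \in [u]$ identifies the transitive discrete groupoid $\mathscr G|_{[u]}$ with the product groupoid $[u] \times G_u \times [u]$; a direct computation with the bisection basis then yields $R(\mathscr G|_{[u]}) \cong M_{|[u]|}(RG_u)$. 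From here, all three ``if'' directions are immediate, since Noetherianness, Artinianness and semisimplicity pass through finite direct products and through Morita equivalence $S \leftrightarrow M_n(S)$: in (2) one uses that finite $G$ and Artinian $R$ make $RG$ a finitely generated Artinian $R$-module, and in (3) one applies Maschke termwise over each field summand of $R$.

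\emph{Infinite unit space kills both chain conditions.} I would prove the combinatorial lemma that any infinite locally compact Hausdorff totally disconnected space $X$ contains an infinite pairwise disjoint family $V_1, V_2, \ldots$ of nonempty compact open subsets: in the compact case, iteratively split $X$ into two clopen pieces keeping at least one infinite; in the noncompact case, iteratively choose a compact open neighborhood of a point outside the running union and subtract that union. The elements $e_n := 1_{V_n} \in R\mathscr G$ are then pairwise orthogonal nonzero idempotents, so the left ideals $R\mathscr G \cdot e_n$ sum internally as a direct sum inside $R\mathscr G$; the partial sums $\bigoplus_{k \leq n} R\mathscr G \cdot e_k$ form a strictly ascending chain and the tail sums $\bigoplus_{k \geq n} R\mathscr G \cdot e_k$ form a strictly descending chain, so $R\mathscr G$ is neither Noetherian nor Artinian. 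Since semisimplicity implies Artinianness, this forces $\mathscr G^{(0)}$ finite in all three parts.

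\emph{Inside the blocks, and main obstacle.} Once $\mathscr G^{(0)}$ is forced finite, the structure theorem reduces the remaining necessary claims to classical ring theory. For (1), Morita equivalence applied to each Noetherian direct factor $M_{|[u]|}(RG_u)$ shows each $RG_u$ is Noetherian. For (2), the same argument with ``Artinian'' together with Connell's theorem forces each $G_u$ to be finite, and then $R$ inherits Artinianness as an $R$-module direct summand of $RG_u$. For (3), the Maschke direction is reversed via the fact that $\mathrm{char}(F) \mid |G|$ forces a nontrivial Jacobson radical in $FG$, and a commutative Artinian semisimple ring must be a finite product of fields. I expect the main technical obstacle to be the explicit block decomposition: one must verify that the compact-open-bisection basis of $R(\mathscr G|_{[u]})$ matches the matrix-units-times-group-elements basis of $M_{|[u]|}(RG_u)$ under the chosen transversal, with compositions of bisections producing the correct matrix-times-group multiplication. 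Given this foundation, the rest of the argument is routine.
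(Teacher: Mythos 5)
Your proposal is correct, and its first half is essentially the paper's own argument: the paper's Proposition~\ref{p:finitelymanyobs} proves exactly your block decomposition (a finite Hausdorff unit space is discrete, hence $\mathscr G$ is a discrete groupoid and $R\mathscr G\cong\prod_i M_{n_i}(RG_i)$ via a choice of basepoints and transversal arrows), and then deduces all three chain conditions inside the blocks by the same appeals to Morita invariance, Connell's theorem and Maschke's theorem that you make. Where you genuinely diverge is the necessity of finiteness of $\mathscr G\skel 0$. The paper notes that $R\mathscr G\chi_U\subseteq R\mathscr G\chi_V$ if and only if $U\subseteq V$ for compact open $U,V\subseteq\mathscr G\skel 0$, so Noetherianity forces the ascending chain condition on compact open subsets, and then proves a topological lemma (Lemma~\ref{l:topology}): a maximal compact open must be the whole space, so the space is compact; complements then give the descending chain condition; minimal compact open neighborhoods are singletons; and a discrete compact space is finite. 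The Artinian case is handled there by the remark that Artinian implies Noetherian. Your route instead extracts from any infinite locally compact Hausdorff totally disconnected space an infinite pairwise disjoint family of nonempty compact opens (both of your case constructions are sound, provided you note in the compact case that the complement of a compact open set is again compact open) and uses the resulting orthogonal idempotents to produce a strictly ascending chain of partial sums and a strictly descending chain of tail sums of left ideals. The two arguments cost about the same, but yours has a real advantage in part (2): it kills the Artinian property directly, with no appeal to ``Artinian implies Noetherian,'' which is a unital-ring fact (Hopkins--Levitzki) and so is only immediately available when $\mathscr G\skel 0$ is compact; your treatment of the descending chain condition is self-contained and symmetric with the ascending one. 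The remaining reverse implications (Morita to recover $RG_u$, Connell to force $G_u$ finite, the central nilpotent $\sum_{g\in G}g$ to reverse Maschke) match the paper's.
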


Theorem~\ref{t:main} is in a sense a negative result since it indicates that \'etale groupoid algebras satisfy chain conditions only under very stringent hypotheses.  Nonetheless, we recover the known results for Leavitt path algebras and inverse semigroup algebras as special cases.

\section{\'Etale groupoids and their algebras}
In this paper, following the Bourbaki convention, a topological space will be called compact if it is Hausdorff and satisfies the property that every open cover has a finite subcover.

\subsection{\'Etale groupoids}
A \emph{topological groupoid} $\mathscr G$ is a groupoid (i.e., a small category each of whose morphisms is an isomorphism) whose object (or unit) space $\mathscr G\skel 0$ and arrow space $\mathscr G\skel 1$ are topological spaces and whose domain map $\dom$, range map $\ran$, multiplication map, inversion map and unit map $u\colon \mathscr G\skel 0\to \mathscr G\skel 1$ are all continuous.  Since $u$ is a homeomorphism with its image, we often identify elements of $\mathscr G\skel 0$ with the corresponding identity arrows and view $\mathscr G\skel 0$ as a subspace of $\mathscr G\skel 1$ with the subspace topology. 

A topological groupoid $\mathscr G$ is \emph{\'etale} if $\dom$ is a local homeomorphism.  This implies that $\ran$ and the multiplication map are local homeomorphisms  and that $\mathscr G\skel 0$ is open in $\mathscr G\skel 1$~\cite{resendeetale}.  Note that the fibers of $\dom$ and $\ran$ are discrete in the induced topology. 

An \'etale groupoid is said to be \emph{ample}\cite{Paterson} if $\mathscr G\skel 0$ is Hausdorff and has a basis of compact open sets.  In this case $\mathscr G\skel 1$ is locally Hausdorff but need not be Hausdorff.    Note that any discrete groupoid is ample. A discrete group is the same thing as an ample gropoid with a single object.

If $x\in \mathscr G\skel 0$, then the \emph{isotropy group} $G_x$ at $x$ is the group of all arrows $g\colon x\to x$.  The \emph{orbit} $\mathcal O_x$ of $x\in \mathcal G\skel 0$ is the set of objects $y$ such that there exists an arrow $g\colon x\to y$.  The isotropy groups of elements in the same orbit are isomorphic.

\subsection{Groupoid algebras}
Let $\mathscr G$ be an ample groupoid and $R$ a commutative ring with unit.  Define $R\mathscr G$ to be the $R$-submodule of $R^{\mathscr G\skel 1}$ spanned by the characteristic functions $\chi_U$ with $U\subseteq \mathscr G\skel 1$ compact open.  If $\mathscr G\skel 1$ is Hausdorff, then $R\mathscr G$ consists precisely of the compactly supported, locally constant functions $\mathscr G\skel 1\to R$. See~\cite{mygroupoidalgebra,mygroupoidarxiv,operatorguys1} for details.

The convolution product on $R\mathscr G$, defined by
\[f_1\ast f_2(g)=\sum_{\dom(h)=\dom(g)}  f_1(gh\inv)f_2(h), \] turns $R\mathscr G$ into an $R$-algebra.  Note that if $U,V\subseteq \mathscr G\skel 0$ are compact open, then $\chi_U\ast\chi_V=\chi_{U\cap V}$.
The ring $R\mathscr G$ is unital if and only if $\mathscr G\skel 0$ is compact. 

If $\mathscr G$ is discrete, then the elements of $R\mathscr G$ are just the finitely supported functions $\mathscr G\skel 1\to R$ and we can identify $R\mathscr G$ with the category algebra of $\mathscr G$ (in the sense of Mitchell~\cite{ringoids,Webb}).  That is we can identify the underlying $R$-module with the free $R$-module $R\mathscr G\skel 1$ on $\mathscr G\skel 1$ and equip it with the unique product extending the product on basis elements given by
\[g\cdot h = \begin{cases} gh, & \text{if}\ \dom(g)=\ran(h)\\ 0, & \text{else.}\end{cases}\]

\section{Proof of Theorem~\ref{t:main}}
We shall first prove Theorem~\ref{t:main} under the hypothesis that $\mathscr G\skel 0$ is finite.  Then we shall show that the Noetherian (and hence Artinian) condition implies that $\mathscr G\skel 0$ is finite.

\begin{Prop}\label{p:finitelymanyobs}
Suppose that $\mathscr G$ is an ample groupoid with $\mathscr G\skel 0$ finite.  Let $\mathcal O_1,\ldots, \mathcal O_k$ be the orbits of $\mathscr G$, let $G_i$ be the isotropy group of $\mathcal O_i$ (well defined up to isomorphism) and let $n_i=|\mathcal O_i|$, for $i=1,\ldots, k$.  Then \[R\mathscr G\cong \prod_{i=1}^k M_{n_i}(RG_i)\] for any commutative ring with unit $R$.  In particular, $R\mathscr G$ is Noetherian if and only if each $RG_i$ is Noetherian, for $i=1,\ldots, k$,  $R\mathscr G$ is Artinian if and only if $R$ is Artinian and  $\mathscr G$ is finite and $R\mathscr G$ is semisimple if and only if $\mathscr G$ is finite and  $R$ is a finite direct product of fields whose characteristics do not divide the order of any $G_i$.
\end{Prop}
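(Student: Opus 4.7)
The plan is to first decompose $R\mathscr G$ as a product of algebras indexed by the orbits, then identify each transitive summand as a matrix algebra over the group algebra of the isotropy, and finally deduce the three chain-condition statements from classical facts about group algebras combined with Morita invariance of these conditions.

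Since $\mathscr G\skel 0$ is finite and Hausdorff it is discrete, so each orbit $\mathcal O_i$ is clopen in $\mathscr G\skel 0$, and the restriction $\mathscr G_i = \dom\inv(\mathcal O_i)\cap \ran\inv(\mathcal O_i)$ is a clopen subgroupoid of $\mathscr G$. Consequently $\mathscr G\skel 1$ is the topological disjoint union of the $\mathscr G_i\skel 1$, and the characteristic functions of these components form orthogonal central idempotents summing to the identity of $R\mathscr G$; this yields $R\mathscr G\cong \prod_{i=1}^k R\mathscr G_i$ and reduces the problem to the case of a single orbit.

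Now assume $\mathscr G$ is transitive with $\mathscr G\skel 0=\{x_1,\ldots,x_n\}$, put $G=G_{x_1}$, and pick arrows $g_j\colon x_1\to x_j$ with $g_1=1_{x_1}$. Because the $\dom$-fibres in an ample groupoid are discrete, both $G$ and each hom-set $\mathscr G(x_i,x_j)=\dom\inv(x_i)\cap \ran\inv(x_j)$ are discrete, and the map $a\mapsto g_j a g_i\inv$ is a homeomorphism $G\to \mathscr G(x_i,x_j)$ by continuity of multiplication and inversion. Piecing the blocks together gives an isomorphism of topological groupoids $\mathscr G\cong G\times P_n$, where $P_n$ is the pair groupoid on $n$ objects; applying $R(-)$, which is compatible with groupoid products, yields $R\mathscr G\cong RG\otimes_R RP_n\cong M_n(RG)$. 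Combined with the orbit decomposition, this produces the asserted isomorphism $R\mathscr G\cong \prod_{i=1}^k M_{n_i}(RG_i)$.

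The chain-condition assertions then reduce to well-known facts: a finite direct product satisfies a given chain condition iff each factor does, and $M_n(A)$ is Noetherian (resp.\ Artinian, resp.\ semisimple) iff $A$ is. The Noetherian statement is then immediate. For the Artinian statement, Connell's theorem says $RG_i$ is Artinian iff $R$ is Artinian and $G_i$ is finite; since each $n_i$ is already finite, finiteness of all $G_i$ is equivalent to finiteness of $\mathscr G$. The semisimple statement combines Maschke's theorem with its converse and the fact that a commutative semisimple ring is a finite product of fields (the invertibility of $|G_i|$ in $R=\prod F_j$ translating into the stated characteristic condition). The main obstacle is the transitive step: one must verify that $\mathscr G\cong G\times P_n$ as a topological groupoid and not merely as an abstract one, which is where the ample hypothesis and discreteness of the $\dom$-fibres are essential.
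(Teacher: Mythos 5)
Your proof is correct and follows essentially the same route as the paper: both hinge on the observation that $\mathscr G\skel 1$ is discrete (so $R\mathscr G$ is just the ordinary category algebra of a discrete groupoid with finitely many objects), then apply the standard decomposition of such an algebra into matrix algebras over isotropy group algebras --- your explicit identification $\mathscr G_i\cong G_i\times P_{n_i}$ is exactly the folklore isomorphism $g\mapsto g_z\inv g g_y E_{zy}$ the paper cites --- and finish with Connell's theorem and Maschke's theorem. One small correction: when some isotropy group is infinite, $\mathscr G_i\skel 1$ is an infinite discrete set, so its characteristic function is not an element of $R\mathscr G$; the orthogonal central idempotents realizing the product decomposition are the $\chi_{\mathcal O_i}$ (characteristic functions of the orbits viewed inside the unit space), which are compact open, central because the $\mathcal O_i$ are invariant, and sum to the identity $\chi_{\mathscr G\skel 0}$.
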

\begin{proof}
Since $\dom$ is a local homeomorphism and $\mathscr G\skel 0$ is discrete, it follows that $\mathscr G\skel 1$ is discrete, i.e., $\mathscr G$ is a discrete groupoid with finitely many objects. Thus $R\mathscr G$ is the usual category algebra of this groupoid~\cite{ringoids,Webb} and the isomorphism is then folklore (cf.~\cite[Theorem~8.15]{repbook} where it is proved for finite groupoids but only finiteness of the set of objects is used in the proof).  Namely, one fixes a basepoint $x_i\in \mathcal O_i$ (and assumes $G_i=G_{x_i}$)  and chooses, for each $y\in \mathcal O_i$, an arrow $g_y\colon x_i\to y$.  The isomorphism sends an arrow $g\colon y\to z$ in $\mathcal O_i$ to $g_z\inv gg_yE_{zy}\in M_{n_i}(RG_i)$ where we index the rows and columns of matrices in the factor $M_{n_i}(RG_i)$ by $\mathcal O_i$.

Since a finite product of rings is Noetherian (respectively, Artinian) if and only if each factor is and a matrix algebra is Noetherian (respectively, Artinian) if and only if the base of the matrix algebra is, we deduce that $R\mathscr G$ is Noetherian (respectively, Artinian) if and only if each $RG_i$ is, for $i=1,\ldots k$. To complete the proof it suffices to apply a result of Connell that states that a group algebra $RG$ is Artinian if and only if $R$ is Artinian and $G$ is finite~\cite{connell}, to apply Maschke's theorem and to observe that a groupoid is finite if and only if it has finitely many objects and each isotropy group is finite (in which case, it has cardinality $\sum_{i=1}^k n_i^2|G_i|$ using the above notation).
\end{proof}

Let us prove a topological lemma, which is essentially a well-known result about Boolean algebras.

\begin{Lemma}\label{l:topology}
Let $X$ be a Hausdorff space with a basis of compact open sets.  Then $X$ satisfies the ascending chain condition on compact open subsets if and only if $X$ is finite.
\end{Lemma}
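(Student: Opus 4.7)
The plan is to handle the two directions of the equivalence separately. The easy direction is immediate: a finite Hausdorff space is discrete, hence every subset is compact open, and since there are only finitely many subsets the ascending chain condition holds trivially.

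For the harder direction I would argue the contrapositive: assuming $X$ is infinite, the goal is to construct an infinite strictly ascending chain of compact open subsets. The strategy is to first produce a sequence $V_1, V_2, \ldots$ of pairwise disjoint nonempty compact open subsets of $X$, from which the chain $U_n = V_1 \cup \cdots \cup V_n$ is automatically strictly ascending. I would build the $V_n$ recursively while maintaining the invariant that $Y_n := X \setminus (V_1 \cup \cdots \cup V_n)$ is infinite, starting from $Y_0 = X$ (note $Y_n$ is always open since each $V_i$ is closed, being compact in a Hausdorff space). Given $Y_n$ infinite, pick any $x \in Y_n$ and, using the compact open basis together with the openness of $Y_n$, choose a compact open $V$ with $x \in V \subseteq Y_n$. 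If $V$ is finite, setting $V_{n+1} = V$ keeps $Y_n \setminus V_{n+1}$ cofinite in $Y_n$ and hence infinite. If $V$ is infinite, then $V$ is itself an infinite compact Hausdorff space with a basis of compact opens, so it cannot be discrete and must contain an accumulation point $p$; I would then pick $y \in V \setminus \{p\}$, separate $y$ and $p$ by disjoint compact opens $W_y, W_p \subseteq V$, and set $V_{n+1} = W_y$, so that $Y_n \setminus V_{n+1}$ contains the neighborhood $W_p$ of the accumulation point $p$ and is therefore infinite.

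The principal obstacle I expect is precisely this infinite-$V$ case: one must carve a proper compact open piece out of $V$ while leaving infinitely many points outside. The accumulation point argument provides the needed mechanism, and morally it is the same idea that underlies the classical fact, which the author alludes to in motivating the lemma, that an infinite Boolean algebra violates the ascending chain condition.
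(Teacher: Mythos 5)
Your proof is correct, but it takes a genuinely different route from the paper's. You argue the contrapositive, recursively constructing infinitely many pairwise disjoint nonempty compact open sets $V_1,V_2,\ldots$ (using the accumulation-point argument to keep the complement infinite at each stage) and then taking partial unions to exhibit an explicit strictly ascending chain. The paper instead works forward from the ascending chain condition: a maximal compact open subset must be all of $X$, so $X$ is compact; compactness makes the compact open sets closed under complementation, so the descending chain condition follows as well; a minimal compact open set containing a point $x$ must equal $\{x\}$ by the Hausdorff separation property; hence $X$ is discrete and compact, so finite. The paper's argument is shorter and leans on the Boolean-algebra structure of the compact opens once compactness is in hand, whereas yours is more constructive and actually proves slightly more: it produces an infinite antichain of disjoint compact opens, so it simultaneously refutes the descending chain condition and does not need to establish compactness of $X$ first. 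All the steps you flag as delicate do go through: a compact set in a Hausdorff space is closed, so each $Y_n$ is open; an infinite compact Hausdorff space has a non-isolated point $p$, and every neighborhood of $p$ is then infinite by the $T_1$ property, which is exactly what keeps $Y_{n+1}$ infinite after you remove $W_y$.
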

\begin{proof}
Clearly, if $X$ is finite, then it satisfies the ascending chain condition on compact open subsets.  Assume now that $X$ satisfies the ascending chain condition on compact open subsets.    First observe that $X$ is compact. Indeed, $X$ must have a maximal compact open subset $K$. If $K\neq X$ and $x\in X\setminus K$, then there is a compact open neighborhood $U$ of $x$ and $K\cup U\supsetneq K$ is a strictly larger compact open subset.  Thus $X=K$.  It follows from compactness of $X$ that the compact open subsets are closed under complement and hence $X$ also enjoys descending chain condition on compact open subsets.  Hence each point $x\in X$ is contained in a minimal compact open subset $K_x$. Suppose that $y\in K_x\setminus \{x\}$. Then since $X$ is Hausdorff with a basis of compact open subsets, there is a compact open subset $V$ with $x\in V\subseteq K_x$ and $y\notin V$.  This contradicts the minimality of $K_x$ and we deduce that $K_x=\{x\}$.  Thus $X$ is discrete and compact, whence finite.
\end{proof}

Recall that if $A$ is an ring and $e,f\in A$ are idempotents, then $Ae\subseteq Af$ if and only if $ef=e$.
\begin{Prop}\label{p:accbool}
Let $\mathscr G$ be an ample groupoid and suppose that $R\mathscr G$ is Noetherian for some commutative ring with unit $R$.  Then $\mathscr G\skel 0$ is finite.
\end{Prop}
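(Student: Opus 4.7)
The plan is to argue by contrapositive via Lemma~\ref{l:topology}: I would show that if $\mathscr G\skel 0$ fails to satisfy the ascending chain condition on compact open subsets, then $R\mathscr G$ is not Noetherian. By Lemma~\ref{l:topology} this suffices, since $\mathscr G\skel 0$ is Hausdorff with a basis of compact open sets. The mechanism for converting topological chains into algebraic chains is the assignment $U\mapsto \chi_U$, which sends compact open subsets of $\mathscr G\skel 0$ to idempotents of $R\mathscr G$, together with the principal-left-ideal trick stated just before the proposition.

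Concretely, suppose $U_1\subsetneq U_2\subsetneq \cdots$ is a strictly ascending sequence of compact open subsets of $\mathscr G\skel 0$. Each $\chi_{U_i}$ is an idempotent of $R\mathscr G$ because $\chi_{U}\ast \chi_V=\chi_{U\cap V}$ for compact open $U,V\subseteq \mathscr G\skel 0$. For $U_i\subseteq U_{i+1}$ we get $\chi_{U_i}\ast \chi_{U_{i+1}}=\chi_{U_i}$, so the recalled fact yields $R\mathscr G\,\chi_{U_i}\subseteq R\mathscr G\,\chi_{U_{i+1}}$. To see the containment is strict, suppose instead $R\mathscr G\,\chi_{U_i}=R\mathscr G\,\chi_{U_{i+1}}$. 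Then also $\chi_{U_{i+1}}\ast \chi_{U_i}=\chi_{U_{i+1}}$, i.e.\ $\chi_{U_i\cap U_{i+1}}=\chi_{U_{i+1}}$, which forces $U_{i+1}\subseteq U_i$ and contradicts $U_i\subsetneq U_{i+1}$. Hence $R\mathscr G\,\chi_{U_1}\subsetneq R\mathscr G\,\chi_{U_2}\subsetneq\cdots$ is a strictly ascending chain of left ideals of $R\mathscr G$, contradicting the Noetherian hypothesis.

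There is essentially no obstacle here: the whole argument hinges on the fact that unit-space idempotents are orthogonal/comparable in the Boolean manner dictated by their supports, and the non-unitality of $R\mathscr G$ does not interfere because the idempotent criterion $Ae\subseteq Af\iff ef=e$ goes through in a non-unital ring $A$ (the idempotent $e$ lies in $Ae=\{ae:a\in A\}$ as $e=e\cdot e$). The subtle point to be careful about is just that one uses \emph{both} inclusions of ideals to extract both equalities $\chi_{U_i}\ast\chi_{U_{i+1}}=\chi_{U_i}$ and $\chi_{U_{i+1}}\ast\chi_{U_i}=\chi_{U_{i+1}}$; the second is what collapses the chain. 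Once that is in place the proof is a one-liner reducing a Noetherian algebraic condition to the topological condition already handled by Lemma~\ref{l:topology}.
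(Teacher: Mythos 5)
Your proposal is correct and follows essentially the same route as the paper: both use the idempotents $\chi_U$ for compact open $U\subseteq \mathscr G\skel 0$, the criterion $Ae\subseteq Af\iff ef=e$ to translate inclusions of compact open sets into inclusions of principal left ideals, and then Lemma~\ref{l:topology}. The paper states the equivalence $R\mathscr G\chi_U\subseteq R\mathscr G\chi_V\iff U\subseteq V$ in one line (which already gives strictness of the ideal chain), while you unpack the strictness by checking both inclusions explicitly; this is only a difference of presentation.
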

\begin{proof}
Let $U,V\subseteq \mathscr G\skel 0$ be compact and open. Then $\chi_U,\chi_V\in R\mathscr G$ are idempotents and $R\mathscr G\chi_U\subseteq R\mathscr G\chi_V$ if and only if $\chi_{U\cap V}=\chi_U\ast \chi_V=\chi_U$, that is, if and only if $U\subseteq V$.  It follows that if $R\mathscr G$ is Noetherian, then $\mathscr G\skel 0$ satisfies the ascending chain condition on compact open subsets and hence is finite by Lemma~\ref{l:topology}.
\end{proof}

Theorem~\ref{t:main} is now an immediate application of Proposition~\ref{p:accbool} and Proposition~\ref{p:finitelymanyobs}.

\section{Applications}
In this section, we use Theorem~\ref{t:main} to characterize the Noetherian and Artinian properties for Leavitt path algebras and for inverse semigroup algebras.  The  results for Leavitt path algebras are due to Abrams, Aranda~Pino and Siles~Molina~\cite{LeavittNoeth,LeavittArtinian}, in the special case of coefficients in a field; for inverse semigroup algebras the Noetherian result is due to Okni\'nski~\cite{Noetherianokninskiinverse} and the Artinian result is a special case of Zelmanov's results~\cite{Zelmanovsgp}.

\subsection{Leavitt path algebras}
Let $E=(E\skel 0,E\skel 1)$ be a (directed) graph (or quiver) with vertex set $E\skel 0$ and edge set $E\skel 1$.  We use $\sour(e)$ for the source of an edge $e$ and $\ran(e)$ for the range, or target, of an edge.   A vertex $v$ is called a \emph{sink} if $\sour\inv(v)=\emptyset$ and it is called an \emph{infinite emitter} if $|\sour\inv(v)|=\infty$.  The length of a finite (directed) path $\alpha$ is denoted $|\alpha|$.

The \emph{Leavitt path algebra}~\cite{Leavitt1,LeavittPardo,Abramssurvey,Leavittbook} $L_R(E)$ of $E$ with coefficients in the unital commutative ring $R$ is the $R$-algebra generated by a set $\{v\in E\skel 0\}$ of pairwise orthogonal idempotents and a set of variables $\{e,e^*\mid e\in E\skel 1\}$ satisfying the relations:
\begin{enumerate}
\item $\sour(e)e=e=e\ran(e)$ for all $e\in E\skel 1$;
\item $\ran(e)e^*= e^*=e^*\sour(e)$ for all $e\in E\skel 1$;
\item $e^*e'=\delta_{e,e'}\ran(e)$ for all $e,e'\in E\skel 1$;
\item $v=\sum_{e\in \sour^{-1}(v)} ee^*$ whenever $v$ is not a sink and not an infinite emitter.
\end{enumerate}

It is well known that $L_R(E)=R\mathscr G_E$  for the graph groupoid $\mathscr G_E$ defined as follows.  Let $\partial E$ consist of all one-sided infinite paths in $E$ as well as all finite paths $\alpha$ ending in a vertex $v$ that is either a sink or an infinite emitter.  If $\alpha$ is a finite path in $E$ (possibly empty), put $Z(\alpha)=\{\alpha\beta\in \partial E\}$.  Note that $Z(\alpha)$ is never empty.  Then a basic open neighborhood  of $\partial E$ is of the form $Z(\alpha)\setminus (Z(\alpha e_1)\cup\cdots \cup Z(\alpha e_n))$ with $e_i\in E\skel 1$, for $i=1,\ldots n$ (and possibly $n=0$).  These neighborhoods are compact open.

  The graph groupoid $\mathscr G_E$ is the given by:
\begin{itemize}
\item $\mathscr G_E\skel 0= \partial E$;
\item $\mathscr G_E\skel 1 =\{ (\alpha\gamma,|\alpha|-|\beta|,\beta\gamma)\in \partial E\times \mathbb Z\times \partial E\}\mid |\alpha|,|\beta|<\infty\}$.
\end{itemize}
One has $\dom(\eta,k,\gamma)=\gamma$, $\ran(\eta,k,\gamma)=\eta$ and $(\eta,k,\gamma)(\gamma,m,\xi) = (\eta, k+m,\xi)$.  The inverse of $(\eta,k,\gamma)$ is $(\gamma,-k,\eta)$.

A basis of compact open subsets for the topology on $\mathscr G_E\skel 1$ can be described as follows. Let $\alpha,\beta$ be finite paths ending at the same vertex and let $U\subseteq Z(\alpha)$, $V\subseteq Z(\beta)$ be compact open with $\alpha\gamma\in U$ if and only if $\beta\gamma\in V$.  Then the set \[(U,\alpha,\beta,V)=\{\alpha\gamma,|\alpha|-|\beta|,\beta\gamma)\mid \alpha\gamma\in U,\beta\gamma\in V\}\] is a basic compact open set of $\mathscr G_E\skel 1$.
Of special importance are the compact open sets $Z(\alpha,\beta) = (Z(\alpha),\alpha,\beta,Z(\beta))= \{(\alpha\gamma,|\alpha|-|\beta|,\beta\gamma)\in \mathscr G\skel 1\}$ where $\alpha,\beta$ are finite paths ending at the same vertex.

There is an isomorphism $L_R(E)\to R\mathscr G_E$ that sends $v\in E\skel 0$ to the characteristic function of $Z(\varepsilon_v,\varepsilon_v)$ where $\varepsilon_v$ is the empty path at $v$ and, for $e\in E\skel 1$, it sends $e$ to the characteristic function of $Z(e,\varepsilon_{\ran(e)})$ and $e^*$ to the characteristic function of $Z(\varepsilon_{\ran(e)},e)$, cf.~\cite{operatorguys1,groupoidprimitive,CRS17,Strongeffective}.

In~\cite{LeavittArtinian} the finite dimensional Leavitt algebras over a field were characterized.  The Noetherian Leavitt path algebras over a field were determined in~\cite[Theorem~3.10]{LeavittNoeth}.  We extend these results to arbitrary base rings using groupoid methods (the original proofs were purely algebraic).

By a \emph{cycle} in a directed graph $E$, we mean a simple, directed,  closed circuit.  A cycle is said to have an \emph{exit} if some vertex on the cycle has out-degree at least two.  The graph $E$ is said to satisfy \emph{condition (NE)} if no cycle in $E$ has an exit.  The following is presumably well known but I do not know a reference.

\begin{Prop}\label{p:noexit}
Let $E$ be a graph.  Then $\mathscr G_E\skel 0$ is finite if and only if $E$ is finite and satisfies condition (NE).
\end{Prop}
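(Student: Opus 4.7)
The plan is to prove both directions directly using the explicit description of $\partial E = \mathscr G_E\skel 0$.

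For the backward direction, assume $E$ is finite and satisfies (NE). Finiteness of $E$ means there are no infinite emitters, so $\partial E$ consists only of (a) the infinite one-sided paths in $E$ and (b) the finite paths ending at a sink. Condition (NE) forces each vertex on a cycle to have out-degree exactly one, so as soon as a path reaches a cycle vertex its continuation is uniquely determined and consists of traversing the cycle forever. Consequently, no finite path ending at a sink ever visits a cycle vertex; such a path visits each vertex at most once, and since $E$ is finite there are only finitely many of them. Similarly, any infinite path in $E$ must eventually revisit some vertex, and the first such revisit forces the path onto a cycle; before entering the cycle the path visits non-cycle vertices without repetition, so there are only finitely many infinite paths as well. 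Hence $\partial E$ is finite.

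For the forward direction, assume $\partial E$ is finite. The excerpt records that every $Z(\alpha)$ is nonempty, so in particular $Z(\varepsilon_v)\neq\emptyset$ for every vertex $v$, and these sets are pairwise disjoint (paths starting at different vertices are distinct). Thus $|E\skel 0|\leq |\partial E|<\infty$. If some $v\in E\skel 0$ were an infinite emitter, then for each $e\in \sour\inv(v)$ the set $Z(e)$ would be a nonempty subset of $\partial E$, and these sets are pairwise disjoint over distinct $e$, forcing $|\partial E|=\infty$; hence $E$ has no infinite emitters, and combined with finiteness of $E\skel 0$ this gives $|E\skel 1|<\infty$, so $E$ is finite.

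It remains to derive (NE). Suppose, for contradiction, that $E$ contains a cycle $c=e_1e_2\cdots e_n$ based at a vertex $v$ whose $i$-th vertex $v_i=\ran(e_i)$ admits an exit, i.e., an edge $f$ with $\sour(f)=v_i$ and $f\neq e_{i+1}$. Pick any element $\beta \in Z(f)\subseteq\partial E$, which exists because $Z(f)$ is nonempty. For each $k\geq 0$, the concatenation $\alpha_k := c^k(e_1\cdots e_i)\beta$ is an element of $\partial E$ starting at $v$ in which the first occurrence of the edge $f$ is at position $kn+i+1$; consequently the $\alpha_k$ are pairwise distinct, contradicting finiteness of $\partial E$. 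Thus no cycle has an exit, and $E$ satisfies (NE).

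The main technical obstacle is the backward direction, where one must be careful to enumerate elements of $\partial E$ by combining the two cases (finite paths to sinks versus infinite paths entering a cycle) and to justify that (NE) really forces out-degree one on each cycle vertex. The forward direction is a more mechanical contradiction argument once one exploits the nonemptiness of the basic open sets $Z(\alpha)$.
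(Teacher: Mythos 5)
Your proof is correct and follows essentially the same route as the paper: the forward direction uses the pairwise disjoint nonempty cylinder sets $Z(\varepsilon_v)$ and $Z(e)$ to bound $E\skel0$ and $E\skel1$, and distinct elements obtained by winding around a cycle before taking the exit to rule out exits (the paper phrases this as the disjoint nonempty sets $Z(\alpha^n e)$); the backward direction is the same enumeration of $\partial E$ as short paths to sinks plus eventually periodic paths forced by condition (NE). The extra explicit treatment of infinite emitters is harmless but already subsumed by the finiteness of $E\skel1$.
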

\begin{proof}
Suppose first that $\mathscr G_E\skel 0=\partial E$ is finite.  Then $E\skel 0$ must be finite as the cylinder sets $Z(\varepsilon_v)$ with $v\in E\skel 0$ are pairwise disjoint and non-empty.  As the cylinder sets $Z(e)$ with $e\in E\skel 1$ are pairwise disjoint and non-empty, we deduce that  $E\skel 1$ is finite and hence $E$ is finite. Suppose now that some cycle contains an exit. Then there is a vertex $v$ of out-degree at least two such that there is a cycle $\alpha$ starting at $v$.  Let $e$ be an edge emitted by $v$ not belonging to $\alpha$.  Then the cylinder sets $Z(\alpha^ne)$ with $n\geq 0$ are non-empty and pairwise disjoint, again contradicting that $\partial E$ is finite.  Thus $E$ satisfies condition (NE).

Conversely, suppose that $E$ is finite and no cycle has an exit.  Then any path of length greater than $|E\skel 0|$ must enter a cycle and it can never leave that cycle.  Thus $\partial E$ consists of those paths of length at most $|E\skel 0|$ ending at a sink and those infinite paths of the form $\alpha\beta\beta\cdots$ with $\alpha$ a path of length at most $|E\skel 0|$ and $\beta$ a cycle.   As a finite graph has only finitely many cycles (as cycles do not repeat vertices),  we conclude that $\mathscr G_E\skel 0=\partial E$ is finite.
\end{proof}

The next proposition, characterizing isotropy in graph groupoids, should also be considered folklore.

\begin{Prop}\label{p:isotropy}
Let $E$ be a graph and $\gamma\in \partial E$. Then the isotropy group $G_\gamma$ is trivial unless $\gamma=\rho\zeta\zeta \cdots$ where $\rho$ is a path and $\zeta$ is a cycle, in which case $G_\gamma\cong \mathbb Z$.
\end{Prop}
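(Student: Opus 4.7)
My strategy is to identify $G_\gamma$ with a subgroup of $\mathbb{Z}$ via the degree cocycle $k$, and then translate the existence of a positive element of this subgroup into the asserted combinatorial structure on $\gamma$.

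Every arrow of $G_\gamma$ is a triple $(\gamma,k,\gamma)\in \partial E\times\mathbb{Z}\times\partial E$ for some $k\in\mathbb{Z}$, and the assignment $(\gamma,k,\gamma)\mapsto k$ is manifestly injective. Since composition in $\mathscr{G}_E$ satisfies $(\gamma,k,\gamma)(\gamma,k',\gamma)=(\gamma,k+k',\gamma)$, this assignment is also a group homomorphism, so $G_\gamma$ embeds as a subgroup of $\mathbb{Z}$ and is therefore either trivial or isomorphic to $\mathbb{Z}$. The whole question thus reduces to determining when the image contains a positive element.

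By the defining description of $\mathscr{G}_E\skel 1$, having $(\gamma,k,\gamma)\in G_\gamma$ with $k>0$ is equivalent to the existence of finite paths $\alpha,\beta$ with $|\alpha|-|\beta|=k$ and a common tail $\delta\in\partial E$ satisfying $\alpha\delta=\gamma=\beta\delta$. Comparing the first $|\beta|$ letters of the two presentations of $\gamma$ forces $\beta$ to be an initial segment of $\alpha$; writing $\alpha=\beta\mu$ with $|\mu|=k$ collapses the relation to $\mu\delta=\delta$, which upon iteration gives $\delta=\mu\mu\mu\cdots$. Since $\mu\mu$ is a legal concatenation, $\mu$ is a closed walk at the terminal vertex of $\beta$, and one obtains the presentation $\gamma=\beta\cdot\mu\mu\mu\cdots$ of the asserted shape. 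Conversely, given $\gamma=\rho\zeta\zeta\cdots$ with $\zeta$ a closed walk, the arrow $(\gamma,|\zeta|,\gamma)\in G_\gamma$ is exhibited by taking $\alpha=\rho\zeta$, $\beta=\rho$, and $\delta=\zeta\zeta\cdots$.

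To finish, I would let $k_0$ be the positive generator of the image of $G_\gamma$ in $\mathbb{Z}$ and use the corresponding closed walk $\mu$ of length $k_0$ as the block $\zeta$. The one delicate point I anticipate is upgrading this minimal closed walk to a cycle in the strict sense: if $\zeta$ revisited a vertex other than its shared source and range, one would want to split off a strictly shorter closed walk that still serves as a period of the tail, contradicting minimality of $k_0$. This passage from a primitive closed walk to a simple cycle is the only genuine combinatorial obstacle in the argument; modulo it, the proposition is an immediate consequence of the two preceding paragraphs.
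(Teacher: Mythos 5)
Your argument is essentially the paper's: both identify $G_\gamma$ with a subgroup of $\mathbb Z$ via the degree coordinate $(\gamma,k,\gamma)\mapsto k$, extract from a positive element a factorization $\alpha=\beta\mu$ and hence $\gamma=\beta\mu\mu\cdots$ with $\mu$ a closed walk, and note the converse. The one step you flag as delicate --- upgrading the minimal closed walk to a \emph{simple} cycle --- is genuinely problematic, and the fix you sketch does not work. Counterexample: one vertex $v$ with two loops $a,b$, and $\gamma=ababab\cdots$. Then $G_\gamma\cong\mathbb Z$ with minimal period $\mu=ab$; this walk revisits $v$, but it splits only into the closed walks $a$ and $b$, neither of which is a period of the tail, so minimality of $k_0$ is not contradicted --- and indeed $\gamma$ cannot be written as $\rho\zeta\zeta\cdots$ with $\zeta$ a simple closed circuit. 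You should be aware that the paper's own proof simply asserts ``$\zeta$ a cycle'' at this point and has the identical gap: the statement is literally correct only if ``cycle'' is read as ``closed path,'' or under the hypotheses in force when the proposition is actually used (a finite graph satisfying condition (NE), where every vertex on a closed walk has out-degree one, so periods really are simple cycles). In short: same route as the paper, correct up to exactly the point where the paper's proof is also incomplete, and your instinct that this was the only genuine obstacle was right --- but the obstacle is real, not merely technical.
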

\begin{proof}
An isotropy group element is of the form $g=(\gamma,k,\gamma)$ where $\gamma=\alpha\xi=\beta\xi$ with $k=|\alpha|-|\beta|$.  Moreover, $g$ is a unit unless $k\neq 0$. If $k\neq 0$, replacing $g$ by its inverse we may assume that $|\alpha|>|\beta|$.  Then $\alpha = \beta \eta$ and $\xi = \eta\xi=\eta\eta\cdots$. We thus deduce that $\gamma=\rho\zeta\zeta\cdots$ with $\rho$ a path and $\zeta$ a cycle.  Moreover, $G_\gamma = \{\gamma\}\times H\times \{\gamma\}\cong H\cong \mathbb Z$ with $H$ a non-trivial subgroup of $\mathbb Z$.
\end{proof}

As $R\mathbb Z\cong R[x,x\inv]$, the Laurent polynomial ring in one-variable over $R$, we immediately obtain from Proposition~\ref{p:noexit} and Proposition~\ref{p:isotropy}, Hilbert's basis theorem and Theorem~\ref{t:main} the following result, generalizing~\cite{LeavittArtinian} and~\cite[Theorem~3.10]{LeavittNoeth}.

\begin{Thm}
Let $E$ be a directed graph (i.e., quiver), $R$ a commutative ring with unit and $L_R(E)$ the Leavitt path algebra of $E$ over $R$.
\begin{enumerate}
\item $L_R(E)$ is Noetherian if and only if $R$ is Noetherian, $E$ is finite and no cycle in $E$ has an exit, i.e., $E$ satisfies condition (NE). Moreover, in this case $L_R(E)$ is a finite direct product of matrix algebras over $R$ and $R[x,x\inv]$.
\item $L_R(E)$ is Artinian if and only if $R$ is Artinian and $E$ is finite acyclic, in which case  it is a finite direct product of matrix algebras over $R$.
\item $L_R(E)$ is semisimple if and only if $R$ is a finite direct product of fields and $E$ is finite acyclic.
\end{enumerate}
\end{Thm}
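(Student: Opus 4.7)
The plan is to derive all three statements from Theorem~\ref{t:main} applied to the graph groupoid $\mathscr G_E$, using the isomorphism $L_R(E)\cong R\mathscr G_E$ together with the structural facts in Propositions~\ref{p:noexit} and~\ref{p:isotropy}. The three inputs I line up in advance are: (i) by Proposition~\ref{p:noexit}, $\mathscr G_E\skel 0$ is finite if and only if $E$ is finite and satisfies (NE); (ii) by Proposition~\ref{p:isotropy}, every isotropy group of $\mathscr G_E$ is either trivial or isomorphic to $\mathbb Z$, with the latter occurring precisely when some boundary path is eventually periodic; (iii) $R\mathbb Z\cong R[x,x\inv]$, which by Hilbert's basis theorem (applied to $R[x]$, followed by localization at $x$) is Noetherian if and only if $R$ is.

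For (1), Theorem~\ref{t:main}(1) tells us that $L_R(E)$ is Noetherian if and only if $\mathscr G_E\skel 0$ is finite and $RG$ is Noetherian for every isotropy group $G$. By (i) the first condition translates to $E$ being finite and satisfying (NE); by (ii) and (iii) the second condition translates to $R$ being Noetherian (since the possible isotropy group algebras are $R$ and $R[x,x\inv]$). The structure statement of Theorem~\ref{t:main} then gives $L_R(E)$ as a finite direct product of matrix algebras over $R$ and $R[x,x\inv]$.

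For (2), Theorem~\ref{t:main}(2) says $L_R(E)$ is Artinian if and only if $\mathscr G_E$ itself is finite and $R$ is Artinian; here finiteness of $\mathscr G_E$ means finiteness both of $\mathscr G_E\skel 0$ and of every isotropy group. Combining (i) and (ii), if $E$ is finite and (NE) holds but some cycle $\zeta$ is present, then any boundary path of the form $\rho\zeta\zeta\cdots$ has isotropy $\mathbb Z$, contradicting finiteness. Hence Artinian forces $E$ to be finite and acyclic; conversely, if $E$ is finite and acyclic, then $\partial E$ consists only of finite paths terminating at sinks, it is finite, and Proposition~\ref{p:isotropy} makes every isotropy group trivial, so $\mathscr G_E$ is finite and $L_R(E)$ is a finite product of matrix algebras over $R$. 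For (3), Theorem~\ref{t:main}(3) requires $\mathscr G_E$ finite and $R$ a finite product of fields whose characteristics avoid the orders of the isotropy groups; since semisimplicity implies Artinianness, we are in the finite acyclic regime where all isotropy is trivial, so the characteristic condition is automatically satisfied.

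The only non-bookkeeping step is verifying that an eventually periodic boundary path actually exists whenever $E$ is finite, satisfies (NE), and contains a cycle — but this is exactly what the proof of Proposition~\ref{p:noexit} produces, and Proposition~\ref{p:isotropy} then identifies the isotropy as $\mathbb Z$. Everything else is a direct dictionary translation from the groupoid conditions of Theorem~\ref{t:main} to the graph-theoretic conditions on $E$.
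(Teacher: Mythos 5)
Your proposal is correct and follows exactly the route the paper takes: the paper's "proof" is the single sentence preceding the theorem, which cites precisely the same ingredients (Proposition~\ref{p:noexit}, Proposition~\ref{p:isotropy}, the isomorphism $R\mathbb Z\cong R[x,x\inv]$ with Hilbert's basis theorem, and Theorem~\ref{t:main}) and leaves the dictionary translation you carry out to the reader. Your additional observation that finiteness of $\mathscr G_E$ forces acyclicity because any cycle produces an eventually periodic boundary path with isotropy $\mathbb Z$ is exactly the intended (unwritten) argument.
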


\subsection{Inverse semigroups}
An \emph{inverse semigroup} is a semigroup $S$ such that, for each $s\in S$, there exists a unique $s^*$ in $S$ with $ss^*s=s$, $s^*ss^*=s^*$; see~\cite{Lawson} for an introduction to inverse semigroup theory.  Connections between inverse semigroups, \'etale groupoids and operator algebras can be found in~\cite{Renault,Paterson,Exel}.  In particular, each ample groupoid algebra is a quotient of an inverse semigroup algebra~\cite{mygroupoidalgebra}.  It is shown in~\cite[Theorem~6.3]{mygroupoidalgebra} that if $R$ is a commutative ring with unit and $S$ is an inverse semigroup, then $RS\cong R\mathscr G(S)$ where $\mathscr G(S)$ is Paterson's universal groupoid of $S$~\cite{Paterson}, a certain ample groupoid associated to $S$~\cite{Paterson,Exel,mygroupoidalgebra}.   The full description of this groupoid is a bit complicated but we describe some of its salient features.

 The set $E(S)$ of idempotents of $S$ is a commutative subsemigroup~\cite{Lawson}. The unit space of $\mathscr G(S)$ is the set of homomorphisms from $E(S)$ to  $\{0,1\}$ (with the latter viewed as a semigroup under multiplication).  Thus $\mathscr G(S)\skel 0$ is finite if and only if $E(S)$ is finite as the homomorphisms from an idempotent, commutative semigroup (i.e., a meet semilattice) to $\{0,1\}$ separate points. Moreover, when $E(S)$ is finite  $\mathscr G(S)\skel 1$ is in bijection with $S$, the isotropy groups are precisely the maximal subgroups of $S$ and, in fact, $\mathscr G(S)$ reduces to the so-called underlying groupoid of the inverse semigroup considered in~\cite{Lawson} or~\cite{mobius2}; see~\cite{mygroupoidalgebra} for details.  Recall that a \emph{maximal subgroup} of $S$ is a unit group of a monoid $eSe$ with $e\in E(S)$. Thus Theorem~\ref{t:main} recovers Okni\'nski's theorem~\cite{Noetherianokninskiinverse} (see also~\cite[Ch.~12, Cor.~9]{oknisemigroupalgebra}) and Zelmanov's theorem~\cite{Zelmanovsgp} (restricted to inverse semigroups).

\begin{Thm}
Let $S$ be an inverse semigroup and $R$ a commutative ring with unit.
\begin{enumerate}
  \item $RS$ is Noetherian if and only if $S$ has finitely many idempotents and  $RG$ is Noetherian for each maximal subgroup $G$ of $S$.
  \item $RS$ is Artinian if and only if $R$ is Artinian and $S$ is finite.
  \item $RS$ is semisimple if and only if $S$ is finite and $R$ is a finite direct product of fields whose characteristics  divide the order of no maximal subgroup of $S$.
\end{enumerate}
In any of these cases, $RS$ is isomorphic to a finite direct product of matrix algebras over group algebras of maximal subgroups.
\end{Thm}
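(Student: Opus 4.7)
The plan is to deduce this theorem directly from Theorem~\ref{t:main} applied to Paterson's universal groupoid $\mathscr G(S)$, using the isomorphism $RS\cong R\mathscr G(S)$ of \cite[Theorem~6.3]{mygroupoidalgebra}. All the heavy lifting has already been done; what remains is to translate each condition on $\mathscr G(S)$ into the corresponding condition on $S$.

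First I would observe that, by the isomorphism above, $RS$ is Noetherian (respectively, Artinian, semisimple) if and only if $R\mathscr G(S)$ is.  Then the three statements of the theorem are obtained from the three parts of Theorem~\ref{t:main} by performing two translations:
\begin{enumerate}
\item[(a)] $\mathscr G(S)\skel 0$ is finite if and only if $E(S)$ is finite;
\item[(b)] when $E(S)$ is finite, the isotropy groups of $\mathscr G(S)$ coincide with the maximal subgroups of $S$, and $\mathscr G(S)$ is finite if and only if $S$ is finite.
\end{enumerate}

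For (a), recall that $\mathscr G(S)\skel 0$ is the space of semigroup homomorphisms $E(S)\to\{0,1\}$.  If $E(S)$ is finite, then so is this function space.  Conversely, since $E(S)$ is a meet semilattice, the principal character $\chi_e(f)=1$ if and only if $f\geq e$ separates the points of $E(S)$; hence a finite unit space forces $E(S)$ to be finite.  For (b), once $E(S)$ is finite, the description of $\mathscr G(S)$ recalled in the excerpt identifies its arrow space with $S$ itself and its isotropy groups with the maximal subgroups $eSe^\times$ for $e\in E(S)$; thus $\mathscr G(S)$ is finite if and only if $S$ is finite.

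Plugging (a) and (b) into Theorem~\ref{t:main} yields the three statements verbatim, and the final assertion about $RS$ being a finite direct product of matrix algebras over group algebras of maximal subgroups follows from the corresponding assertion in Theorem~\ref{t:main} together with (b).  The main (and only) obstacle is really the bookkeeping for (a) and (b); both of these are essentially contained in the description of $\mathscr G(S)$ given in \cite{Paterson,mygroupoidalgebra}, so I would simply cite those references for the structural facts and let the translation do the work.
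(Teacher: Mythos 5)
Your proposal is correct and follows the same route as the paper: the paper also deduces the theorem immediately from Theorem~\ref{t:main} via the isomorphism $RS\cong R\mathscr G(S)$, using that the unit space of $\mathscr G(S)$ is the character space of $E(S)$ (so it is finite exactly when $E(S)$ is, since characters of a meet semilattice separate points) and that for finite $E(S)$ the arrows of $\mathscr G(S)$ are in bijection with $S$ and the isotropy groups are the maximal subgroups. No discrepancies to report.
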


\def\cprime{$'$} \def\cprime{$'$} \def\cprime{$'$} \def\cprime{$'$}
  \def\cprime{$'$} \def\cprime{$'$} \def\cprime{$'$} \def\cprime{$'$}
  \def\cprime{$'$} \def\cprime{$'$}

\end{document}